\newtheorem{theorem}{Theorem}
\theoremstyle{plain}
\newtheorem{definition}{Definition}
\newtheorem{example}{Example}
\newtheorem{proposition}{Proposition}
\newtheorem{remark}{Remark}
\newtheorem{solution}{Solution}
\numberwithin{equation}{section}
\begin{document}
\title[The general linear group of degree $n$ for 3D matrices]{The general
linear group of degree $n$ for 3D matrices $GL(n,n,p;F)$}
\author{Orgest ZAKA}
\address[O. ZAKA]{Department of Mathematics, Faculty of Technical
Science, University of Vlora "Ismail QEMAL", Vlora, Albania}
\email[Orgest ZAKA]{gertizaka@yahoo.com}
\date{December 10, 2018}
\subjclass[2000]{Primary15XX, 20XX, 47Dxx; Secondary 15A09, 15A15, 20H20, }
\keywords{3D matrix, determinant for 3D matrices, inverse of 3D matrix,
general linear group for 3D matrices}

\begin{abstract}
In this article we give the meaning of the determinant for 3D matrices with
elements from a field F, and the meaning of 3D inverse matrix. Based on my
previous work titled '3D Matrix Rings', we want to constructed the 'general
linear group of degree $n$ for 3D matrices, which i mark with $GL(n,n,p;F)$'
for 3D-matrices, analog to 'general linear group of degree $n$' known.
\end{abstract}

\maketitle

\section{Introduction, 3D matrix}

This paper comes as a continuation of the ideas that arise based on my
previous work, of the 3D matrix ring with element from an whatever field $%
\mathbf{F}$ (see \cite{[OZ3DM]}). At this point we are making a brief
summary associated with 3D matrices and the 3D matrix presented in \cite%
{[OZ3DM]}. Our objective is to constructed the 'general linear group of
degree $n$ for 3D matrices, which i mark with $GL(n,n,p;F)$' for
3D-matrices, analog to 'general linear group of degree $n$' known. For this
we need new notions, which we will give in the following points.

\begin{definition}
\cite{[OZ3DM]} 3-dimensional $m\times n\times p$ matrix will call, a matrix
which has: m-\textbf{horizontal layers} (analogous to m-rows), n-\textbf{%
vertical page} (analogue with n-columns in the usual matrices) and p-\textbf{%
vertical layers} (p-1 of which are hidden).

The set of these matrixes the write how:%
\begin{equation*}
\mathcal{M}_{m\times n\times p}(\mathbf{F})=\left\{ a_{i,j,k}|a_{i,j,k}\in F-%
\text{field }\forall i=\overline{1,m};\text{ }j=\overline{1,n};\text{ }k=%
\overline{1,p}\right\}
\end{equation*}
\end{definition}

\subsection{ADDITION OF 3D MATRIX}

\begin{definition}
\cite{[OZ3DM]} The addition of two matrices $\mathbf{A,B}\in \mathcal{M}%
_{m\times n\times p}(\mathbf{F})$ we will call the matrix:%
\begin{equation*}
\mathbf{C}_{m\times n\times p}(\mathbf{F})=\left\{
c_{i,j,k}|c_{i,j,k}=a_{i,j,k}+b_{i,j,k},\text{ }\forall i=\overline{1,m};%
\text{ }j=\overline{1,n};\text{ }k=\overline{1,p}\right\}
\end{equation*}

The appearance of the addition of $m\times n\times p,$ 3D matrices will be
as in Figure 1, where matrices $\mathbf{A}$ and $\mathbf{B}$ have the
following appearance,%
\begin{eqnarray*}
\mathbf{A} &=&\left\{ \left( a_{i,j,k}\right) |\text{ }\forall i=\overline{%
1,m};\text{ }j=\overline{1,n};\text{ }k=\overline{1,p}\right\} ; \\
\mathbf{B} &=&\left\{ \left( b_{i,j,k}\right) |\text{ }\forall i=\overline{%
1,m};\text{ }j=\overline{1,n};\text{ }k=\overline{1,p}\right\}
\end{eqnarray*}%

\begin{figure}
\centering
\includegraphics[width=1.0\textwidth]{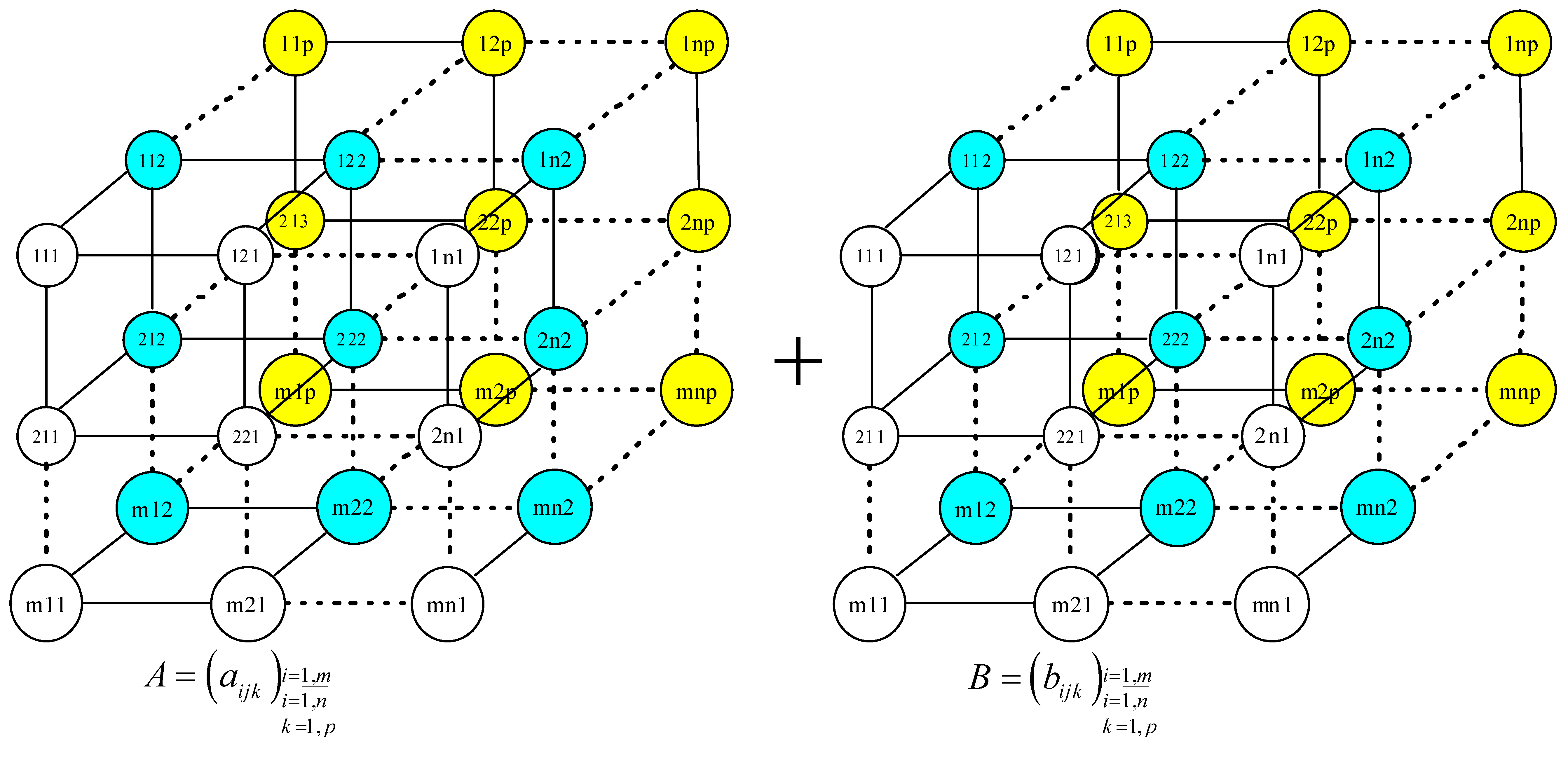}
\label{fig:Fig1}
\caption{The appearance of the addition of 3D matrix}
\end{figure}

\end{definition}

\begin{definition}
\cite{[OZ3DM]}The 3-D, Zero matrix $m\times n\times p,$ we will called the
matrix that has all its elements zero.%
\begin{equation*}
\mathbf{O=O}_{m\times n\times p}=\left\{ \left( 0_{\mathbf{F}}\right)
_{i,j,k}|\text{ }\forall i=\overline{1,m};\text{ }j=\overline{1,n};\text{ }k=%
\overline{1,p}\right\} \in \mathcal{M}_{m\times n\times p}(\mathbf{F})
\end{equation*}
\end{definition}

\begin{definition}
\cite{[OZ3DM]} The opposite matric of an matrice 
\begin{equation*}
\mathbf{A}_{m\times n\times p}=\left\{ \left( a_{i,j,k}\right) |\text{ }%
\forall i=\overline{1,m};\text{ }j=\overline{1,n};\text{ }k=\overline{1,p}%
\right\} \in \mathcal{M}_{m\times n\times p}(\mathbf{F})
\end{equation*}

will, called matrix 
\begin{equation*}
-\mathbf{A}_{m\times n\times p}=\left\{ \left( -a_{i,j,k}\right) |\text{ }%
\forall i=\overline{1,m};\text{ }j=\overline{1,n};\text{ }k=\overline{1,p}%
\right\} \in \mathcal{M}_{m\times n\times p}(\mathbf{F}).
\end{equation*}
\end{definition}

Where $\left( -a_{i,j,k}\right) $ is a opposite element of element $%
a_{i,j,k}\in \mathbf{F,}$ so 
\begin{equation*}
a_{i,j,k}+\left( -a_{i,j,k}\right) =0_{\mathbf{F}},\forall i=\overline{1,m}%
;j=\overline{1,n};k=\overline{1,p},
\end{equation*}

and $\left( \mathbf{F,+,\cdot }\right) $ is field, which satisfies the
condition

\begin{eqnarray*}
\mathbf{A}_{m\times n\times p}+\left( -\mathbf{A}_{m\times n\times p}\right)
&=&\left\{ \left( a_{i,j,k}\right) +\left( -a_{i,j,k}\right) |\text{ }%
\forall i=\overline{1,m};\text{ }j=\overline{1,n};\text{ }k=\overline{1,p}%
\right\} \\
&=&\left\{ \left( 0_{\mathbf{F}}\right) _{i,j,k}|\text{ }\forall i=\overline{%
1,m};\text{ }j=\overline{1,n};\text{ }k=\overline{1,p}\right\} \\
&=&\mathbf{O}_{m\times n\times p}=\mathbf{O}
\end{eqnarray*}

\begin{theorem}
\cite{[OZ3DM]}$\left( \mathcal{M}_{m\times n\times p}(\mathbf{F}),+\right) $
is abelian grup.
\end{theorem}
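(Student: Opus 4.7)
The plan is to verify the four group axioms plus commutativity entrywise, pushing every identity down to the corresponding axiom in the field $\mathbf{F}$, since addition of 3D matrices was defined coordinate by coordinate. Closure is essentially free: for $\mathbf{A},\mathbf{B}\in\mathcal{M}_{m\times n\times p}(\mathbf{F})$, the entries $a_{i,j,k}+b_{i,j,k}$ live in $\mathbf{F}$ for every admissible triple $(i,j,k)$, so $\mathbf{A}+\mathbf{B}$ is again an $m\times n\times p$ array with entries in $\mathbf{F}$.

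For associativity, I would take three matrices $\mathbf{A},\mathbf{B},\mathbf{C}$ and compute both $(\mathbf{A}+\mathbf{B})+\mathbf{C}$ and $\mathbf{A}+(\mathbf{B}+\mathbf{C})$ using the addition definition; the $(i,j,k)$-entries become $(a_{i,j,k}+b_{i,j,k})+c_{i,j,k}$ and $a_{i,j,k}+(b_{i,j,k}+c_{i,j,k})$ respectively, which are equal by associativity of $+$ in $\mathbf{F}$. The neutral element is the zero 3D matrix $\mathbf{O}_{m\times n\times p}$ from the earlier definition: entrywise, $a_{i,j,k}+0_{\mathbf{F}}=a_{i,j,k}$. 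Inverses are already supplied by the opposite-matrix definition: the computation $\mathbf{A}+(-\mathbf{A})=\mathbf{O}$ was in fact carried out explicitly just before the theorem statement, so I can simply cite that display.

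Commutativity closes the argument in the same style: the $(i,j,k)$-entry of $\mathbf{A}+\mathbf{B}$ equals $a_{i,j,k}+b_{i,j,k}=b_{i,j,k}+a_{i,j,k}$, which is the $(i,j,k)$-entry of $\mathbf{B}+\mathbf{A}$, using commutativity of $+$ in $\mathbf{F}$.

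There is really no main obstacle here; the proof is a bookkeeping exercise that transports the abelian group structure of $(\mathbf{F},+)$ to the product set $\mathbf{F}^{m\times n\times p}$ via the componentwise definition. The only mild care needed is notational: keeping the triple index $(i,j,k)$ and the universal quantifier $\forall i=\overline{1,m},\ j=\overline{1,n},\ k=\overline{1,p}$ consistent with the definitions of addition, zero matrix, and opposite matrix used above, so that each equality of 3D matrices is read unambiguously as equality in $\mathbf{F}$ at every entry.
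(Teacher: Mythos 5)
Your proof is correct: the componentwise verification of closure, associativity, identity, inverses, and commutativity, each inherited from the abelian group $(\mathbf{F},+)$, is exactly the standard argument, and it matches the definitions of addition, the zero matrix, and the opposite matrix given before the theorem (including the explicit display $\mathbf{A}+(-\mathbf{A})=\mathbf{O}$). The paper itself gives no proof here, citing its predecessor \cite{[OZ3DM]} instead, so there is nothing further to compare against; your argument is the one the omitted proof would contain.
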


\subsection{THE MULTIPLICATION OF $n\times n\times p$ MATRICES}

In the same way, as have the meaning of a 3D $3\times 3\times 3$ matrix
multiplication to \cite{[OZ3DM]}, we give the definition of 3D matrix
multiplication for \ $\mathbf{A,B}\in \mathcal{M}_{n\times n\times p}(%
\mathbf{F})$.

\begin{definition}
\cite{[OZ3DM]} The multiplication of two matrices $\mathbf{A,B}\in \mathcal{M%
}_{n\times n\times p}(\mathbf{F})$ we will call the matrix $\mathbf{C=A\odot
B}\in \mathcal{M}_{n\times n\times p}(\mathbf{F}),$ calculated as follows:

Matrices will normally have the appearance:%
\begin{equation*}
\forall \mathbf{A=}\left[ 
\begin{array}{c}
\left( 
\begin{array}{cccc}
a_{11p} & a_{12p} & \cdots & a_{1np} \\ 
a_{21p} & a_{22p} & \cdots & a_{2np} \\ 
\vdots & \vdots & \ddots & \vdots \\ 
a_{n1p} & a_{n2p} & \cdots & a_{nnp}%
\end{array}%
\right) \\ 
\mathbf{\vdots } \\ 
\left( 
\begin{array}{cccc}
a_{112} & a_{122} & \cdots & a_{1n2} \\ 
a_{212} & a_{222} & \cdots & a_{2n2} \\ 
\vdots & \vdots & \ddots & \vdots \\ 
a_{n12} & a_{n22} & \cdots & a_{nn2}%
\end{array}%
\right) \\ 
\left( 
\begin{array}{cccc}
a_{111} & a_{121} & \cdots & a_{1n1} \\ 
a_{211} & a_{221} & \cdots & a_{2n1} \\ 
\vdots & \vdots & \ddots & \vdots \\ 
a_{n11} & a_{n21} & \cdots & a_{nn1}%
\end{array}%
\right)%
\end{array}%
\right] ;\mathbf{B=}\left[ 
\begin{array}{c}
\left( 
\begin{array}{cccc}
b_{11p} & b_{12p} & \cdots & b_{1np} \\ 
b_{21p} & b_{22p} & \cdots & b_{2np} \\ 
\vdots & \vdots & \ddots & \vdots \\ 
b_{n1p} & b_{n2p} & \cdots & b_{nnp}%
\end{array}%
\right) \\ 
\mathbf{\vdots } \\ 
\left( 
\begin{array}{cccc}
b_{112} & b_{122} & \cdots & b_{1n2} \\ 
b_{212} & b_{222} & \cdots & b_{2n2} \\ 
\vdots & \vdots & \ddots & \vdots \\ 
b_{n12} & b_{n22} & \cdots & b_{nn2}%
\end{array}%
\right) \\ 
\left( 
\begin{array}{cccc}
b_{111} & b_{121} & \cdots & b_{1n1} \\ 
b_{211} & b_{221} & \cdots & b_{2n1} \\ 
\vdots & \vdots & \ddots & \vdots \\ 
b_{n11} & b_{n21} & \cdots & b_{nn1}%
\end{array}%
\right)%
\end{array}%
\right]
\end{equation*}

If we write more briefly%
\begin{equation*}
\mathbf{A=}\left[ 
\begin{array}{c}
\mathbf{A}_{p} \\ 
\vdots \\ 
\mathbf{A}_{2} \\ 
\mathbf{A}_{1}%
\end{array}%
\right] ,\mathbf{B=}\left[ 
\begin{array}{c}
\mathbf{B}_{p} \\ 
\vdots \\ 
\mathbf{B}_{2} \\ 
\mathbf{B}_{1}%
\end{array}%
\right] \in \mathcal{M}_{n\times n\times p}(\mathbf{F})
\end{equation*}

where $\mathbf{A}_{i}$ and $\mathbf{B}_{i}$ are the $n\times n$ matrices $%
\forall i=\overline{1,p}.$ Then 
\begin{equation*}
\mathbf{C=A\odot B=\left[ 
\begin{array}{c}
\mathbf{A}_{p} \\ 
\vdots \\ 
\mathbf{A}_{2} \\ 
\mathbf{A}_{1}%
\end{array}%
\right] \odot }\left[ 
\begin{array}{c}
\mathbf{B}_{p} \\ 
\vdots \\ 
\mathbf{B}_{2} \\ 
\mathbf{B}_{1}%
\end{array}%
\right] =\mathbf{\left[ 
\begin{array}{c}
\mathbf{A}_{p}\ast \mathbf{B}_{p} \\ 
\vdots \\ 
\mathbf{A}_{2}\ast \mathbf{B}_{2} \\ 
\mathbf{A}_{1}\ast \mathbf{B}_{1}%
\end{array}%
\right] }
\end{equation*}

where action $"\ast "$ is the usual multiplication of matrices.
\end{definition}

\section{Multi-Scalars and Multiplication of 3D matrices with multi-scalar}

In this section we will introduce the concept of '\textit{multi-scalar}',
and we will give a clear idea of the multiplication of 3D matrices with
multi-scalar.

\begin{definition}
Multi-scalar will call one $1\times 1\times p,$ 3D matrix.
\end{definition}

\begin{remark}
A multi-scalar $\ a_{1\times 1\times p}=\left[ 
\begin{array}{c}
\left( \alpha _{11p}\right) \\ 
\vdots \\ 
\left( \alpha _{112}\right) \\ 
\left( \alpha _{111}\right)%
\end{array}%
\right] ,$ we will call "\textbf{absolutely}" different from zero only if $%
\alpha _{11i}\neq 0_{\mathbf{F}},\forall i=\overline{1,p}.$ For the "\textbf{%
absolutely zero}" multi-scalar we will use the note $\widetilde{^{a}0_{%
\mathbf{F}}},$ wich is $\ \widetilde{^{a}0_{\mathbf{F}}}=\left[ 
\begin{array}{c}
\left( 0_{\mathbf{F},p}\right) \\ 
\vdots \\ 
\left( 0_{\mathbf{F},2}\right) \\ 
\left( 0_{\mathbf{F,}1}\right)%
\end{array}%
\right] .$
\end{remark}

Let's have a \textbf{multi-scalar \ }

$\ \ $%
\begin{equation*}
a_{1\times 1\times p}=\left[ 
\begin{array}{c}
\left( \alpha _{11p}\right) \\ 
\vdots \\ 
\left( \alpha _{112}\right) \\ 
\left( \alpha _{111}\right)%
\end{array}%
\right] ,
\end{equation*}

and 3D matrix

$\ $%
\begin{equation*}
\mathbf{A}_{m\times n\times p}\mathbf{=}\left[ 
\begin{array}{c}
\left( 
\begin{array}{cccc}
a_{11p} & a_{12p} & \cdots & a_{1np} \\ 
a_{21p} & a_{22p} & \cdots & a_{2np} \\ 
\vdots & \vdots & \ddots & \vdots \\ 
a_{m1p} & a_{m2p} & \cdots & a_{mnp}%
\end{array}%
\right) \\ 
\vdots \\ 
\left( 
\begin{array}{cccc}
a_{112} & a_{122} & \cdots & a_{1n2} \\ 
a_{212} & a_{222} & \cdots & a_{2n2} \\ 
\vdots & \vdots & \ddots & \vdots \\ 
a_{m12} & a_{m22} & \cdots & a_{mn2}%
\end{array}%
\right) \\ 
\left( 
\begin{array}{cccc}
a_{111} & a_{121} & \cdots & a_{1n1} \\ 
a_{211} & a_{221} & \cdots & a_{2n1} \\ 
\vdots & \vdots & \ddots & \vdots \\ 
a_{m11} & a_{m21} & \cdots & a_{mn1}%
\end{array}%
\right)%
\end{array}%
\right] ,
\end{equation*}

\begin{definition}
\ Multiplication of 3D-matrix $\mathbf{A}_{m\times n\times p}\in \mathcal{M}%
_{m\times n\times p}(\mathbf{F})$ \ with multi-scalar $a_{1\times 1\times p}$
will we call the 3D matrix $\mathbf{B}_{m\times n\times p}\in \mathcal{M}%
_{m\times n\times p}(\mathbf{F}),$ calculated as follows:%
\begin{eqnarray*}
\mathbf{B}_{m\times n\times p} &\mathbf{=}&a_{1\times 1\times p}\ast \mathbf{%
A}_{m\times n\times p}= \\
&=&\left[ 
\begin{array}{c}
\alpha _{11p} \\ 
\vdots \\ 
\alpha _{112} \\ 
\alpha _{111}%
\end{array}%
\right] \ast \left[ 
\begin{array}{c}
\left( 
\begin{array}{cccc}
a_{11p} & a_{12p} & \cdots & a_{1np} \\ 
a_{21p} & a_{22p} & \cdots & a_{2np} \\ 
\vdots & \vdots & \ddots & \vdots \\ 
a_{m1p} & a_{m2p} & \cdots & a_{mnp}%
\end{array}%
\right) \\ 
\vdots \\ 
\left( 
\begin{array}{cccc}
a_{112} & a_{122} & \cdots & a_{1n2} \\ 
a_{212} & a_{222} & \cdots & a_{2n2} \\ 
\vdots & \vdots & \ddots & \vdots \\ 
a_{m12} & a_{m22} & \cdots & a_{mn2}%
\end{array}%
\right) \\ 
\left( 
\begin{array}{cccc}
a_{111} & a_{121} & \cdots & a_{1n1} \\ 
a_{211} & a_{221} & \cdots & a_{2n1} \\ 
\vdots & \vdots & \ddots & \vdots \\ 
a_{m11} & a_{m21} & \cdots & a_{mn1}%
\end{array}%
\right)%
\end{array}%
\right]
\end{eqnarray*}%
\begin{equation*}
=\left[ 
\begin{array}{c}
\left( 
\begin{array}{cccc}
\alpha _{11p}\cdot a_{11p} & \alpha _{11p}\cdot a_{12p} & \cdots & \alpha
_{11p}\cdot a_{1np} \\ 
\alpha _{11p}\cdot a_{21p} & \alpha _{11p}\cdot a_{22p} & \cdots & \alpha
_{11p}\cdot a_{2np} \\ 
\vdots & \vdots & \ddots & \vdots \\ 
\alpha _{11p}\cdot a_{m1p} & \alpha _{11p}\cdot a_{m2p} & \cdots & \alpha
_{11p}\cdot a_{mnp}%
\end{array}%
\right) \\ 
\mathbf{\vdots } \\ 
\left( 
\begin{array}{cccc}
\alpha _{112}\cdot a_{112} & \alpha _{112}\cdot a_{122} & \cdots & \alpha
_{112}\cdot a_{1n2} \\ 
\alpha _{112}\cdot a_{212} & \alpha _{112}\cdot a_{222} & \cdots & \alpha
_{112}\cdot a_{2n2} \\ 
\vdots & \vdots & \ddots & \vdots \\ 
\alpha _{112}\cdot a_{m12} & \alpha _{112}\cdot a_{m22} & \cdots & \alpha
_{112}\cdot a_{mn2}%
\end{array}%
\right) \\ 
\left( 
\begin{array}{cccc}
\alpha _{111}\cdot a_{111} & \alpha _{111}\cdot a_{121} & \cdots & \alpha
_{111}\cdot a_{1n1} \\ 
\alpha _{111}\cdot a_{211} & \alpha _{111}\cdot a_{221} & \cdots & \alpha
_{111}\cdot a_{2n1} \\ 
\vdots & \vdots & \ddots & \vdots \\ 
\alpha _{111}\cdot a_{m11} & \alpha _{111}\cdot a_{m21} & \cdots & \alpha
_{111}\cdot a_{mn1}%
\end{array}%
\right)%
\end{array}%
\right]
\end{equation*}
\end{definition}

So multiplication of the 3D matrix with a multi-scalar is a function

\begin{equation*}
\ast :\mathcal{M}_{1\times 1\times p}(\mathbf{F})\times \mathcal{M}_{m\times
n\times p}(\mathbf{F})\longrightarrow \mathcal{M}_{m\times n\times p}(%
\mathbf{F}).
\end{equation*}

\begin{example}
Let's have a multi-scalar \ 
\begin{equation*}
a_{1\times 1\times 3}=\left[ 
\begin{array}{c}
\left( 2\right) \\ 
\left( 5\right) \\ 
\left( 3\right)%
\end{array}%
\right] \ \ 
\end{equation*}%
$\ $

and 3D matrix 
\begin{equation*}
\mathbf{A}_{2\times 2\times 3}=\left[ 
\begin{array}{c}
\left( 
\begin{array}{cc}
2 & 3 \\ 
4 & 5%
\end{array}%
\right) \\ 
\left( 
\begin{array}{cc}
5 & 0 \\ 
9 & 1%
\end{array}%
\right) \\ 
\left( 
\begin{array}{cc}
1 & 4 \\ 
5 & 3%
\end{array}%
\right)%
\end{array}%
\right] .
\end{equation*}

Matrix obtained by multiplying the 3D matrix $A_{2\times 2\times 3}$ with
multi-scalar $a_{1\times 1\times 3},$ it is the matrix:%
\begin{eqnarray*}
\mathbf{B}_{2\times 2\times 3} &=&a_{1\times 1\times 3}\ast \mathbf{A}%
_{2\times 2\times 3} \\
&=&\left[ 
\begin{array}{c}
\left( 2\right) \\ 
\left( 5\right) \\ 
\left( 3\right)%
\end{array}%
\right] \ast \left[ 
\begin{array}{c}
\left( 
\begin{array}{cc}
2 & 3 \\ 
4 & 5%
\end{array}%
\right) \\ 
\left( 
\begin{array}{cc}
5 & 0 \\ 
9 & 1%
\end{array}%
\right) \\ 
\left( 
\begin{array}{cc}
1 & 4 \\ 
5 & 3%
\end{array}%
\right)%
\end{array}%
\right] = \\
&=&\left[ 
\begin{array}{c}
2\cdot \left( 
\begin{array}{cc}
2 & 3 \\ 
4 & 5%
\end{array}%
\right) \\ 
5\cdot \left( 
\begin{array}{cc}
5 & 0 \\ 
9 & 1%
\end{array}%
\right) \\ 
3\cdot \left( 
\begin{array}{cc}
1 & 4 \\ 
5 & 3%
\end{array}%
\right)%
\end{array}%
\right] =\left[ 
\begin{array}{c}
\left( 
\begin{array}{cc}
4 & 6 \\ 
8 & 10%
\end{array}%
\right) \\ 
\left( 
\begin{array}{cc}
25 & 0 \\ 
45 & 5%
\end{array}%
\right) \\ 
\left( 
\begin{array}{cc}
3 & 12 \\ 
15 & 9%
\end{array}%
\right)%
\end{array}%
\right] .
\end{eqnarray*}
\end{example}

\subsection{DETERMINANTS OF 3D-MATRICES}

Regarding the determinant we will only talk about form matrices $\mathbf{A}%
_{n\times n\times p}\in \mathcal{M}_{n\times n\times p}(\mathbf{F}),$ ie for
matrices that \emph{vertical layers} have square matrices.

\begin{definition}
Determinant of the matrix $\mathbf{A}_{n\times n\times p}\in \mathcal{M}%
_{n\times n\times p}(\mathbf{F}),$ we will call the \textbf{multi-scalar}%
\begin{equation*}
\det \left( \mathbf{A}_{n\times n\times p}\right) =\left[ 
\begin{array}{c}
\det \left( \mathbf{A}_{n\times n,p}\right) \\ 
\mathbf{\vdots } \\ 
\det \left( \mathbf{A}_{n\times n,2}\right) \\ 
\det \left( \mathbf{A}_{n\times n,1}\right)%
\end{array}%
\right]
\end{equation*}

where%
\begin{equation*}
\det \left( \mathbf{A}_{n\times n,1}\right) =\det \left( 
\begin{array}{cccc}
a_{111} & a_{121} & \cdots & a_{1n1} \\ 
a_{211} & a_{221} & \cdots & a_{2n1} \\ 
\vdots & \vdots & \ddots & \vdots \\ 
a_{n11} & a_{n21} & \cdots & a_{nn1}%
\end{array}%
\right) ;
\end{equation*}%
\begin{equation*}
\det \left( \mathbf{A}_{n\times n,2}\right) =\det \left( 
\begin{array}{cccc}
a_{112} & a_{122} & \cdots & a_{1n2} \\ 
a_{212} & a_{222} & \cdots & a_{2n2} \\ 
\vdots & \vdots & \ddots & \vdots \\ 
a_{n12} & a_{n22} & \cdots & a_{nn2}%
\end{array}%
\right) ;...;
\end{equation*}%
\begin{equation*}
\det \left( \mathbf{A}_{n\times n,p}\right) =\det \left( 
\begin{array}{cccc}
a_{11p} & a_{12p} & \cdots & a_{1np} \\ 
a_{21p} & a_{22p} & \cdots & a_{2np} \\ 
\vdots & \vdots & \ddots & \vdots \\ 
a_{n1p} & a_{n2p} & \cdots & a_{nnp}%
\end{array}%
\right)
\end{equation*}
\end{definition}

Referring to the multi-scalar note '\emph{absolutely different from zero}',
we say that for a 3D matrix $\mathbf{A}_{n\times n\times p},$ \ 

\begin{equation*}
\det \left( \mathbf{A}_{n\times n\times p}\right) \neq \widetilde{^{a}0_{%
\mathbf{F}}}\Longleftrightarrow \det \left( \mathbf{A}_{n\times n,i}\right)
\neq 0_{\mathbf{F}},\forall i=\overline{1,p}.
\end{equation*}

\begin{example}
Determinant of the matrix $\mathbf{A}_{2\times 2\times 3},$ of example 1, is
multi-scalar:
\end{example}

\begin{equation*}
\det \left( \mathbf{A}_{2\times 2\times 3}\right) =\left[ 
\begin{array}{c}
\det \left( 
\begin{array}{cc}
2 & 3 \\ 
4 & 5%
\end{array}%
\right) \\ 
\det \left( 
\begin{array}{cc}
5 & 0 \\ 
9 & 1%
\end{array}%
\right) \\ 
\det \left( 
\begin{array}{cc}
1 & 4 \\ 
5 & 3%
\end{array}%
\right)%
\end{array}%
\right] =\left[ 
\begin{array}{c}
\left( -2\right) \\ 
\left( 5\right) \\ 
\left( -17\right)%
\end{array}%
\right]
\end{equation*}

\begin{definition}
\emph{Inverted} of the multi-scalar $a_{1\times 1\times p}=\left[ 
\begin{array}{c}
\left( \alpha _{11p}\right) \\ 
\vdots \\ 
\left( \alpha _{112}\right) \\ 
\left( \alpha _{111}\right)%
\end{array}%
\right] ,$ will called the multi-scalar:%
\begin{equation*}
\widehat{a_{1\times 1\times p}}=\left[ 
\begin{array}{c}
\left( \frac{1}{\alpha _{11p}}\right) \\ 
\vdots \\ 
\left( \frac{1}{\alpha _{112}}\right) \\ 
\left( \frac{1}{\alpha _{111}}\right)%
\end{array}%
\right] .
\end{equation*}
\end{definition}

\begin{example}
\emph{Inverted} of the multi-scalar \ $a_{1\times 1\times 3}=\left[ 
\begin{array}{c}
\left( 2\right) \\ 
\left( 5\right) \\ 
\left( 3\right)%
\end{array}%
\right] ,$ is the multi-scalar:%
\begin{equation*}
\widehat{a_{1\times 1\times 3}}=\left[ 
\begin{array}{c}
\left( \frac{1}{2}\right) \\ 
\left( \frac{1}{5}\right) \\ 
\left( \frac{1}{3}\right)%
\end{array}%
\right] .
\end{equation*}
\end{example}

\section{THE MULTIPLICATION GROUP OF 3D MATRICES}

Referring to \cite{[OZ3DM]}, we have a 3D Matrix Ring, and in that paper we
have shown the possibility of unitary ring. The rest of the assertions that
lead us from unitary ring to a Skew-Field are summing up in this

\begin{theorem}
The structure $\left( \mathcal{M}_{n\times n\times p}(\mathbf{F}),\odot
\right) $ is a unitary semi-group.
\end{theorem}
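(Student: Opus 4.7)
The plan is to observe that the operation $\odot$ decouples the 3D matrix product into $p$ independent copies of the ordinary $n\times n$ matrix product, one per vertical layer. Concretely, writing $\mathbf{A}=[\mathbf{A}_p,\ldots,\mathbf{A}_1]^{T}$ and $\mathbf{B}=[\mathbf{B}_p,\ldots,\mathbf{B}_1]^{T}$ as in the definition, the $k$-th layer of $\mathbf{A}\odot\mathbf{B}$ is exactly $\mathbf{A}_k\ast\mathbf{B}_k$, so every property we need will be inherited layer-by-layer from the classical matrix algebra $(\mathcal{M}_{n\times n}(\mathbf{F}),\ast)$.

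First I would verify closure: since each $\mathbf{A}_k\ast\mathbf{B}_k$ is again an $n\times n$ matrix over $\mathbf{F}$, the stack of these $p$ products is an element of $\mathcal{M}_{n\times n\times p}(\mathbf{F})$, so $\odot$ is a well-defined binary operation. Next I would establish associativity: given a third matrix $\mathbf{C}=[\mathbf{C}_p,\ldots,\mathbf{C}_1]^{T}$, expand $(\mathbf{A}\odot\mathbf{B})\odot\mathbf{C}$ and $\mathbf{A}\odot(\mathbf{B}\odot\mathbf{C})$ using the definition, observe that their $k$-th layers are $(\mathbf{A}_k\ast\mathbf{B}_k)\ast\mathbf{C}_k$ and $\mathbf{A}_k\ast(\mathbf{B}_k\ast\mathbf{C}_k)$ respectively, and invoke the associativity of the usual matrix product $\ast$ on each layer separately.

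For the identity I would propose the candidate
\begin{equation*}
\mathbf{I}_{n\times n\times p}=\left[\begin{array}{c}\mathbf{I}_{n,p}\\ \vdots \\ \mathbf{I}_{n,2}\\ \mathbf{I}_{n,1}\end{array}\right],
\end{equation*}
where each $\mathbf{I}_{n,k}$ is the ordinary $n\times n$ identity matrix, and then check directly from the definition of $\odot$ that its $k$-th layer in $\mathbf{I}_{n\times n\times p}\odot\mathbf{A}$ and in $\mathbf{A}\odot\mathbf{I}_{n\times n\times p}$ equals $\mathbf{I}_{n,k}\ast\mathbf{A}_k=\mathbf{A}_k=\mathbf{A}_k\ast\mathbf{I}_{n,k}$, so both products return $\mathbf{A}$.

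I do not expect any serious obstacle: the argument is essentially a bookkeeping exercise that transfers the semigroup-with-unit structure of $(\mathcal{M}_{n\times n}(\mathbf{F}),\ast)$ pointwise over the $p$ vertical layers. The only point requiring a little care is simply being explicit in the notation for the layered identity $\mathbf{I}_{n\times n\times p}$ so that it is clear this element is unique and lies in $\mathcal{M}_{n\times n\times p}(\mathbf{F})$.
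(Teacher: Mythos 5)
Your proposal is correct and follows essentially the same route as the paper: both arguments reduce $\odot$ layer-by-layer to the ordinary $n\times n$ matrix product, verify associativity on each vertical layer, and exhibit the stacked identity $\mathbf{I}_{n\times n\times p}$ as the unit. Your explicit mention of closure and two-sided identity is slightly more careful than the paper's version, but the underlying idea is identical.
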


\begin{proof}
We show first that:%
\begin{equation*}
\forall \mathbf{A},\mathbf{B},\mathbf{C}\in \mathcal{M}_{n\times n\times p}(%
\mathbf{F}),\left[ \mathbf{A}\odot \mathbf{B}\right] \odot \mathbf{C=A}\odot %
\left[ \mathbf{B}\odot \mathbf{C}\right]
\end{equation*}

truly,

$\left[ \mathbf{A}\odot \mathbf{B}\right] \odot \mathbf{C=}\left( \left[ 
\begin{array}{c}
\mathbf{A}_{p} \\ 
\vdots \\ 
\mathbf{A}_{2} \\ 
\mathbf{A}_{1}%
\end{array}%
\right] \odot \left[ 
\begin{array}{c}
\mathbf{B}_{p} \\ 
\vdots \\ 
\mathbf{B}_{2} \\ 
\mathbf{B}_{1}%
\end{array}%
\right] \right) \odot \left[ 
\begin{array}{c}
\mathbf{C}_{p} \\ 
\vdots \\ 
\mathbf{C}_{2} \\ 
\mathbf{C}_{1}%
\end{array}%
\right] =\left[ 
\begin{array}{c}
\mathbf{A}_{p}\cdot \mathbf{B}_{p} \\ 
\vdots \\ 
\mathbf{A}_{2}\cdot \mathbf{B}_{2} \\ 
\mathbf{A}_{1}\cdot \mathbf{B}_{1}%
\end{array}%
\right] \odot \left[ 
\begin{array}{c}
\mathbf{C}_{p} \\ 
\vdots \\ 
\mathbf{C}_{2} \\ 
\mathbf{C}_{1}%
\end{array}%
\right] =\left[ 
\begin{array}{c}
\left( \mathbf{A}_{p}\cdot \mathbf{B}_{p}\right) \cdot \mathbf{C}_{p} \\ 
\vdots \\ 
\left( \mathbf{A}_{2}\cdot \mathbf{B}_{2}\right) \cdot \mathbf{C}_{2} \\ 
\left( \mathbf{A}_{1}\cdot \mathbf{B}_{1}\right) \cdot \mathbf{C}_{1}%
\end{array}%
\right] =\left[ 
\begin{array}{c}
\mathbf{A}_{p}\cdot \left( \mathbf{B}_{p}\cdot \mathbf{C}_{p}\right) \\ 
\vdots \\ 
\mathbf{A}_{2}\cdot \left( \mathbf{B}_{2}\cdot \mathbf{C}_{2}\right) \\ 
\mathbf{A}_{1}\cdot \left( \mathbf{B}_{1}\cdot \mathbf{C}_{1}\right)%
\end{array}%
\right] =\left[ 
\begin{array}{c}
\mathbf{A}_{p} \\ 
\vdots \\ 
\mathbf{A}_{2} \\ 
\mathbf{A}_{1}%
\end{array}%
\right] \odot \left[ 
\begin{array}{c}
\mathbf{B}_{p}\cdot \mathbf{C}_{p} \\ 
\vdots \\ 
\mathbf{B}_{2}\cdot \mathbf{C}_{2} \\ 
\mathbf{B}_{1}\cdot \mathbf{C}_{1}%
\end{array}%
\right] $

$=\left[ 
\begin{array}{c}
\mathbf{A}_{p} \\ 
\vdots \\ 
\mathbf{A}_{2} \\ 
\mathbf{A}_{1}%
\end{array}%
\right] \odot \left( \left[ 
\begin{array}{c}
\mathbf{B}_{p} \\ 
\vdots \\ 
\mathbf{B}_{2} \\ 
\mathbf{B}_{1}%
\end{array}%
\right] \odot \left[ 
\begin{array}{c}
\mathbf{C}_{p} \\ 
\vdots \\ 
\mathbf{C}_{2} \\ 
\mathbf{C}_{1}%
\end{array}%
\right] \right) =\mathbf{A}\odot \left[ \mathbf{B}\odot \mathbf{C}\right] .$

From the definition of multiplication to 3D matrices it is clear that:%
\begin{equation*}
\ \bigskip \mathbf{I}_{n\times n\times p}=\left[ 
\begin{array}{c}
\mathbf{I}_{n\times n,p} \\ 
\vdots \\ 
\mathbf{I}_{n\times n,2} \\ 
\mathbf{I}_{n\times n,1}%
\end{array}%
\right] ,
\end{equation*}

is unit element of $\mathcal{M}_{n\times n\times p}(\mathbf{F})$, related to
multiplication.
\end{proof}

\begin{remark}
We mark with $\mathcal{M}_{n\times n\times p}^{\ast }(\mathbf{F}),$ 3D
matrix with determinants '\textbf{absolutely different from zero}', ie $\det
\left( \mathbf{A}_{n\times n\times p}\right) \neq \widetilde{^{a}0_{\mathbf{F%
}}}$, (So \textbf{vertical layers} of 3D matrix, are 2D non-singular
matrices.).
\end{remark}

\begin{proposition}
The set of \ $\mathcal{M}_{n\times n\times p}^{\ast }(\mathbf{F}),$ is
closed regarding multiplication.

Well! 
\begin{eqnarray*}
\odot &:&\mathcal{M}_{n\times n\times p}^{\ast }(\mathbf{F})\times \mathcal{M%
}_{n\times n\times p}^{\ast }(\mathbf{F})\rightarrow \mathcal{M}_{n\times
n\times p}^{\ast }(\mathbf{F}),\  \\
\forall \mathbf{A,B} &\in &\mathcal{M}_{n\times n\times p}^{\ast }(\mathbf{F}%
)\Rightarrow \mathbf{A\odot B}\in \mathcal{M}_{n\times n\times p}^{\ast }(%
\mathbf{F}).
\end{eqnarray*}

\begin{proof}
Let's have 
\begin{equation*}
\mathbf{A,B}\in \mathcal{M}_{n\times n\times p}^{\ast }(\mathbf{F}%
)\Rightarrow \det \left( \mathbf{A}\right) \neq \widetilde{^{a}0_{\mathbf{F}}%
}\text{ and }\det \left( \mathbf{B}\right) \neq \widetilde{^{a}0_{\mathbf{F}}%
}
\end{equation*}
\begin{equation*}
\Leftrightarrow \det \left( \mathbf{A}_{n\times n,i}\right) \neq 0_{\mathbf{F%
}}\text{ }and\text{ }\det \left( \mathbf{B}_{n\times n,i}\right) \neq 0_{%
\mathbf{F}},,\forall i=\overline{1,p}
\end{equation*}

So all 2D matrices $\mathbf{A}_{n\times n,i}$ and $\mathbf{B}_{n\times n,i}$%
, $\forall i=\overline{1,p}$ , are non-singular matrices. A well-known
result in Linear algebra, but also mentioned in \cite{[GS4Ed]}, in pp.230,
and \cite{[ShALA3ED]} in pp317, which is 
\begin{equation*}
"\forall A,B\in \mathcal{M}_{n\times n}(\mathbf{F}),\ \det \left( A\cdot
B\right) =\det \left( A\right) \cdot \det \left( B\right) "
\end{equation*}

We use this result as the \textbf{vertical layers} of 3D matrix are
2D-matries, and so we have that:

$\det \left( \mathbf{A}\odot \mathbf{B}\right) \mathbf{=}\det \left( \left[ 
\begin{array}{c}
\mathbf{A}_{n\times n,p} \\ 
\vdots \\ 
\mathbf{A}_{n\times n,2} \\ 
\mathbf{A}_{n\times n,1}%
\end{array}%
\right] \odot \left[ 
\begin{array}{c}
\mathbf{B}_{n\times n,p} \\ 
\vdots \\ 
\mathbf{B}_{n\times n,2} \\ 
\mathbf{B}_{n\times n,1}%
\end{array}%
\right] \right) =\det \left[ 
\begin{array}{c}
\mathbf{A}_{n\times n,p}\cdot \mathbf{B}_{n\times n,p} \\ 
\vdots \\ 
\mathbf{A}_{n\times n,2}\cdot \mathbf{B}_{n\times n,2} \\ 
\mathbf{A}_{n\times n,1}\cdot \mathbf{B}_{n\times n,1}%
\end{array}%
\right] =\left[ 
\begin{array}{c}
\det \left( \mathbf{A}_{n\times n,p}\cdot \mathbf{B}_{n\times n,p}\right) \\ 
\vdots \\ 
\det \left( \mathbf{A}_{n\times n,2}\cdot \mathbf{B}_{n\times n,2}\right) \\ 
\det \left( \mathbf{A}_{n\times n,1}\cdot \mathbf{B}_{n\times n,1}\right)%
\end{array}%
\right] \overset{[2]}{=}\left[ 
\begin{array}{c}
\det \left( \mathbf{A}_{n\times n,p}\right) \cdot \det \left( \mathbf{B}%
_{n\times n,p}\right) \\ 
\vdots \\ 
\det \left( \mathbf{A}_{n\times n,2}\right) \cdot \det \left( \mathbf{B}%
_{n\times n,2}\right) \\ 
\det \left( \mathbf{A}_{n\times n,1}\right) \cdot \det \left( \mathbf{B}%
_{n\times n,1}\right)%
\end{array}%
\right] =$

$=\left[ 
\begin{array}{c}
\det \left( \mathbf{A}_{n\times n,p}\right) \\ 
\vdots \\ 
\det \left( \mathbf{A}_{n\times n,2}\right) \\ 
\det \left( \mathbf{A}_{n\times n,1}\right)%
\end{array}%
\right] \odot \left[ 
\begin{array}{c}
\det \left( \mathbf{B}_{n\times n,p}\right) \\ 
\vdots \\ 
\det \left( \mathbf{B}_{n\times n,2}\right) \\ 
\det \left( \mathbf{B}_{n\times n,1}\right)%
\end{array}%
\right] =\det \left( \mathbf{A}\right) \odot \det \left( \mathbf{B}\right)
\neq \widetilde{^{a}0_{\mathbf{F}}}\Rightarrow $

$\Rightarrow \mathbf{A\odot B}\in \mathcal{M}_{n\times n\times p}^{\ast }(%
\mathbf{F}).$
\end{proof}
\end{proposition}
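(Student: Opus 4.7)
The plan is to reduce the closure statement to the classical multiplicativity of the determinant on $n\times n$ matrices, applied separately to each of the $p$ vertical layers. Since both the 3D product $\odot$ and the 3D determinant have been defined layer-wise, the entire argument should decompose into $p$ independent scalar statements.

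First I would fix $\mathbf{A},\mathbf{B}\in\mathcal{M}_{n\times n\times p}^{\ast}(\mathbf{F})$ and unpack the definitions: by the definition of $\mathcal{M}_{n\times n\times p}^{\ast}(\mathbf{F})$ together with the convention for $\widetilde{^{a}0_{\mathbf{F}}}$, the hypothesis is equivalent to $\det(\mathbf{A}_{n\times n,i})\neq 0_{\mathbf{F}}$ and $\det(\mathbf{B}_{n\times n,i})\neq 0_{\mathbf{F}}$ for every $i=\overline{1,p}$; that is, every vertical layer of $\mathbf{A}$ and $\mathbf{B}$ is a non-singular ordinary $n\times n$ matrix over $\mathbf{F}$.

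Next I would compute $\mathbf{A}\odot\mathbf{B}$ using the layer-wise formula from the definition of $\odot$: its $i$-th vertical layer is the ordinary matrix product $\mathbf{A}_{n\times n,i}\cdot\mathbf{B}_{n\times n,i}$. Applying the definition of the 3D determinant, $\det(\mathbf{A}\odot\mathbf{B})$ is the multi-scalar whose $i$-th entry is $\det(\mathbf{A}_{n\times n,i}\cdot\mathbf{B}_{n\times n,i})$. At this step I would invoke the classical result $\det(AB)=\det(A)\det(B)$ for $n\times n$ matrices over $\mathbf{F}$ to rewrite each entry as $\det(\mathbf{A}_{n\times n,i})\cdot\det(\mathbf{B}_{n\times n,i})$.

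Finally, since $\mathbf{F}$ is a field and both factors are nonzero by hypothesis, each entry is nonzero in $\mathbf{F}$, so $\det(\mathbf{A}\odot\mathbf{B})\neq \widetilde{^{a}0_{\mathbf{F}}}$, which by definition places $\mathbf{A}\odot\mathbf{B}$ in $\mathcal{M}_{n\times n\times p}^{\ast}(\mathbf{F})$. There is no real obstacle here: the only subtlety is presentational, namely keeping the layer bookkeeping consistent so that it is clear the argument is just $p$ parallel copies of the classical fact, and simultaneously observing that the same computation gives the stronger identity $\det(\mathbf{A}\odot\mathbf{B})=\det(\mathbf{A})\odot\det(\mathbf{B})$, which will be useful for the subsequent construction of $GL(n,n,p;F)$.
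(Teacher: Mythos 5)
Your proposal is correct and follows essentially the same route as the paper: unpack the definition of $\mathcal{M}_{n\times n\times p}^{\ast}(\mathbf{F})$ layer by layer, use the layer-wise definitions of $\odot$ and of the 3D determinant, and apply the classical identity $\det(AB)=\det(A)\det(B)$ to each vertical layer, concluding via the absence of zero divisors in $\mathbf{F}$. You even record the same byproduct the paper's computation yields, namely $\det(\mathbf{A}\odot\mathbf{B})=\det(\mathbf{A})\odot\det(\mathbf{B})$, and you are slightly more explicit than the paper about why a product of nonzero field elements is nonzero.
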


\begin{theorem}
The structure $\left( \mathcal{M}_{n\times n\times p}^{\ast }(\mathbf{F}%
),\odot \right) $ is a Group.

Set of 3D matrix $\mathbf{A}_{n\times n\times p}\in \mathcal{M}_{n\times
n\times p}(\mathbf{F}),$ with determinants '\textbf{absolutely different
from zero}', ie $\det \left( \mathbf{A}_{n\times n\times p}\right) \neq 
\widetilde{^{a}0_{\mathbf{F}}}$, associated with ordinary multiplication is
a Group.

\begin{proof}
\textbf{1.} It is clear that the set $\mathcal{M}_{n\times n\times p}^{\ast
}(\mathbf{F})$ is sub-set of $\mathcal{M}_{n\times n\times p}(\mathbf{F})$,
and in the foregoing assertion we showed that the multiplication is closed
in this set, so we have that $\left( \mathcal{M}_{n\times n\times p}^{\ast }(%
\mathbf{F}),\odot \right) $ is subsemigroup of semigroup $\left( \mathcal{M}%
_{n\times n\times p}(\mathbf{F}),\odot \right) ,$ see \cite{[LSGTM]}, \cite%
{[GPASG]}.

\textbf{2.} It is clear that the $\mathbf{I}_{n\times n\times p}\in \mathcal{%
M}_{n\times n\times p}^{\ast }(\mathbf{F})$, after

\begin{equation*}
\det \left( \mathbf{I}_{n\times n\times p}\right) =\left[ 
\begin{array}{c}
\det \left( \mathbf{I}_{n\times n,p}\right) \\ 
\vdots \\ 
\det \left( \mathbf{I}_{n\times n,2}\right) \\ 
\det \left( \mathbf{I}_{n\times n,1}\right)%
\end{array}%
\right] =\left[ 
\begin{array}{c}
1_{\mathbf{F}} \\ 
\vdots \\ 
1_{\mathbf{F}} \\ 
1_{\mathbf{F}}%
\end{array}%
\right] \neq \widetilde{^{a}0_{\mathbf{F}}}.
\end{equation*}

\textbf{3.} Show that: $\forall \mathbf{A}_{n\times n\times p}\in \mathcal{M}%
_{n\times n\times p}^{\ast }(\mathbf{F}),($such that $\det \left( \mathbf{A}%
_{n\times n\times p}\right) \neq \widetilde{^{a}0_{\mathbf{F}}})$, $\exists 
\mathbf{A}_{n\times n\times p}^{-1}\in \mathcal{M}_{n\times n\times p}^{\ast
}(\mathbf{F}),$ \ such that%
\begin{equation*}
\mathbf{A}_{n\times n\times p}\odot \mathbf{A}_{n\times n\times p}^{-1}=%
\mathbf{I}_{n\times n\times p}
\end{equation*}

Let's have 
\begin{eqnarray*}
\mathbf{A}_{n\times n\times p} &=&\left[ 
\begin{array}{c}
\mathbf{A}_{n\times n,p} \\ 
\vdots \\ 
\mathbf{A}_{n\times n,2} \\ 
\mathbf{A}_{n\times n,1}%
\end{array}%
\right] \in \mathcal{M}_{n\times n\times p}^{\ast }(\mathbf{F})\Rightarrow \\
&\Rightarrow &\det \left( \mathbf{A}_{n\times n\times p}\right) \neq 
\widetilde{^{a}0_{\mathbf{F}}}\Leftrightarrow \det \left( \mathbf{A}%
_{n\times n,i}\right) \neq 0_{\mathbf{F}},\forall i=\overline{1,p}
\end{eqnarray*}

So we have, that: $\forall \mathbf{A}_{n\times n,i}$ (as a 2D matrix), $%
\exists \mathbf{A}_{n\times n,i}^{-1}$ such that $\mathbf{A}_{n\times
n,i}\cdot \mathbf{A}_{n\times n,i}^{-1}=\mathbf{I}_{n\times n,i},\forall i=%
\overline{1,p}.$ From this, we can write that, the inverse of 3D matrix $%
\mathbf{A}_{n\times n\times p}=\left[ 
\begin{array}{c}
\mathbf{A}_{n\times n,p} \\ 
\vdots \\ 
\mathbf{A}_{n\times n,2} \\ 
\mathbf{A}_{n\times n,1}%
\end{array}%
\right] $ is a 3D matrix $\mathbf{A}_{n\times n\times p}^{-1}=\left[ 
\begin{array}{c}
\mathbf{A}_{n\times n,p}^{-1} \\ 
\vdots \\ 
\mathbf{A}_{n\times n,2}^{-1} \\ 
\mathbf{A}_{n\times n,1}^{-1}%
\end{array}%
\right] ,$ because the 
\begin{eqnarray*}
\mathbf{A}_{n\times n\times p}\odot \mathbf{A}_{n\times n\times p}^{-1} &=&%
\left[ 
\begin{array}{c}
\mathbf{A}_{n\times n,p} \\ 
\vdots \\ 
\mathbf{A}_{n\times n,2} \\ 
\mathbf{A}_{n\times n,1}%
\end{array}%
\right] \odot \left[ 
\begin{array}{c}
\mathbf{A}_{n\times n,p}^{-1} \\ 
\vdots \\ 
\mathbf{A}_{n\times n,2}^{-1} \\ 
\mathbf{A}_{n\times n,1}^{-1}%
\end{array}%
\right] = \\
&=&\left[ 
\begin{array}{c}
\mathbf{A}_{n\times n,p}\cdot \mathbf{A}_{n\times n,p}^{-1} \\ 
\vdots \\ 
\mathbf{A}_{n\times n,2}\cdot \mathbf{A}_{n\times n,2}^{-1} \\ 
\mathbf{A}_{n\times n,1}\cdot \mathbf{A}_{n\times n,1}^{-1}%
\end{array}%
\right] \\
&=&\left[ 
\begin{array}{c}
\mathbf{I}_{n\times n,p} \\ 
\vdots \\ 
\mathbf{I}_{n\times n,2} \\ 
\mathbf{I}_{n\times n,1}%
\end{array}%
\right] =\mathbf{I}_{n\times n\times p}
\end{eqnarray*}%
$.$

Where%
\begin{equation*}
\mathbf{A}_{n\times n\times p}=\left[ 
\begin{array}{c}
\mathbf{A}_{n\times n,p} \\ 
\vdots \\ 
\mathbf{A}_{n\times n,2} \\ 
\mathbf{A}_{n\times n,1}%
\end{array}%
\right] \text{\ and\ }\mathbf{A}_{n\times n\times p}^{-1}=\left[ 
\begin{array}{c}
\mathbf{A}_{n\times n,p}^{-1} \\ 
\vdots \\ 
\mathbf{A}_{n\times n,2}^{-1} \\ 
\mathbf{A}_{n\times n,1}^{-1}%
\end{array}%
\right] .
\end{equation*}
\end{proof}
\end{theorem}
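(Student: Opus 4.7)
The plan is to reduce each group axiom, one by one, to the corresponding fact about the classical general linear group $GL(n,\mathbf{F})$, exploiting the layer decomposition $\mathbf{A}_{n\times n\times p}=[\mathbf{A}_{n\times n,p};\dots;\mathbf{A}_{n\times n,1}]$ together with the fact that $\odot$ acts layerwise as the usual matrix product $\ast$. Associativity is free: it already holds on the larger semigroup $(\mathcal{M}_{n\times n\times p}(\mathbf{F}),\odot)$ by the previous theorem, and restricts to any subset. Closure inside $\mathcal{M}_{n\times n\times p}^{\ast}(\mathbf{F})$ is precisely the preceding proposition, so nothing further is needed there.

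Next I would verify that the identity $\mathbf{I}_{n\times n\times p}$ lies in $\mathcal{M}_{n\times n\times p}^{\ast}(\mathbf{F})$. By the definition of the determinant of a 3D matrix, $\det(\mathbf{I}_{n\times n\times p})$ is the multi-scalar whose $i$-th entry is $\det(\mathbf{I}_{n\times n,i})=1_{\mathbf{F}}\neq 0_{\mathbf{F}}$ for every $i=\overline{1,p}$, hence $\det(\mathbf{I}_{n\times n\times p})\neq \widetilde{^{a}0_{\mathbf{F}}}$, which places $\mathbf{I}_{n\times n\times p}$ in the starred set and confirms it still acts as two-sided identity there.

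The substantive step is the existence of inverses. Given $\mathbf{A}_{n\times n\times p}\in \mathcal{M}_{n\times n\times p}^{\ast}(\mathbf{F})$, the condition $\det(\mathbf{A}_{n\times n\times p})\neq \widetilde{^{a}0_{\mathbf{F}}}$ unpacks exactly as $\det(\mathbf{A}_{n\times n,i})\neq 0_{\mathbf{F}}$ for every $i$, so each 2D layer $\mathbf{A}_{n\times n,i}$ is invertible in the ordinary sense and has a classical inverse $\mathbf{A}_{n\times n,i}^{-1}\in GL(n,\mathbf{F})$. I would then \emph{define} $\mathbf{A}_{n\times n\times p}^{-1}$ to be the 3D matrix whose $i$-th vertical layer is $\mathbf{A}_{n\times n,i}^{-1}$, and verify directly, using the layerwise definition of $\odot$, that $\mathbf{A}_{n\times n\times p}\odot \mathbf{A}_{n\times n\times p}^{-1}=\mathbf{I}_{n\times n\times p}$ and symmetrically on the other side. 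Finally, since each $\mathbf{A}_{n\times n,i}^{-1}$ has nonzero determinant $1/\det(\mathbf{A}_{n\times n,i})$, the inverse itself lies in $\mathcal{M}_{n\times n\times p}^{\ast}(\mathbf{F})$.

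I do not expect any genuine obstacle here: everything collapses to the layerwise product and to the classical group structure of $GL(n,\mathbf{F})$. The only point requiring mild care is to state both sides of the identity/inverse equations rather than only one, and to remember to check that the candidate inverse sits inside the starred set (not merely in the ambient monoid), which is what keeps the argument from being circular.
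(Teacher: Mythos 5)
Your proposal is correct and follows essentially the same route as the paper: associativity and closure inherited from the ambient semigroup via the preceding proposition, membership of $\mathbf{I}_{n\times n\times p}$ checked through its determinant, and inverses constructed layerwise from the classical inverses $\mathbf{A}_{n\times n,i}^{-1}$. If anything, you are slightly more careful than the printed proof, which verifies only $\mathbf{A}\odot \mathbf{A}^{-1}=\mathbf{I}$ and does not explicitly confirm that $\mathbf{A}^{-1}$ has determinant absolutely different from zero; both of the points you flag are worth keeping.
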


\section{FINDING OF 3D-INVERSE MATRIX}

In this section, we provide a way to find the 3D reverse matrix of a 3D
matrix.

\begin{proposition}
3D inverse matrix of the matrix $\mathbf{A}_{n\times n\times p}\in \mathcal{M%
}_{n\times n\times p}^{\ast }(\mathbf{F}),$ we called the 3D-matrix $\mathbf{%
A}_{n\times n\times p}^{-1}\in \mathcal{M}_{n\times n\times p}^{\ast }(%
\mathbf{F}),$ which has the following form:%
\begin{equation*}
\mathbf{A}_{n\times n\times p}^{-1}=\widehat{\det \left( \mathbf{A}_{n\times
n\times p}\right) }\ast \overline{\overline{\mathbf{A}_{n\times n\times p}}}
\end{equation*}

where $\overline{\overline{\mathbf{A}_{n\times n\times p}}}$ is the \textbf{%
adjugate matrix} of $\mathbf{A}_{n\times n\times p}$ (exactly page by
page)and has the form:%
\begin{equation*}
\overline{\overline{\mathbf{A}_{n\times n\times p}}}=\left[ 
\begin{array}{c}
\left( 
\begin{array}{cccc}
A_{11p} & A_{12p} & \cdots & A_{1np} \\ 
A_{21p} & A_{22p} & \cdots & A_{2np} \\ 
\vdots & \vdots & \ddots & \vdots \\ 
A_{n1p} & A_{n2p} & \cdots & A_{nnp}%
\end{array}%
\right) ^{T} \\ 
\vdots \\ 
\left( 
\begin{array}{cccc}
A_{112} & A_{122} & \cdots & A_{1n2} \\ 
A_{212} & A_{222} & \cdots & A_{2n2} \\ 
\vdots & \vdots & \ddots & \vdots \\ 
A_{n12} & A_{n22} & \cdots & A_{nn2}%
\end{array}%
\right) ^{T} \\ 
\left( 
\begin{array}{cccc}
A_{111} & A_{121} & \cdots & A_{1n1} \\ 
A_{211} & A_{221} & \cdots & A_{2n1} \\ 
\vdots & \vdots & \ddots & \vdots \\ 
A_{n11} & A_{n21} & \cdots & A_{nn1}%
\end{array}%
\right) ^{T}%
\end{array}%
\right]
\end{equation*}

I give a clearer view of matrices $\mathbf{A}_{n\times n\times p}^{-1}$, as
follows:%
\begin{eqnarray*}
\mathbf{A}_{n\times n\times p}^{-1} &=&\widehat{\det \left( \mathbf{A}%
_{n\times n\times p}\right) }\ast \overline{\overline{\mathbf{A}_{n\times
n\times p}}}= \\
&=&\left[ 
\begin{array}{c}
\left( \frac{1}{\left\vert \mathbf{A}_{n\times n,p}\right\vert }\right) \\ 
\vdots \\ 
\left( \frac{1}{\left\vert \mathbf{A}_{n\times n,2}\right\vert }\right) \\ 
\left( \frac{1}{\left\vert \mathbf{A}_{n\times n,1}\right\vert }\right)%
\end{array}%
\right] \ast \left[ 
\begin{array}{c}
\left( 
\begin{array}{cccc}
A_{11p} & A_{12p} & \cdots & A_{1np} \\ 
A_{21p} & A_{22p} & \cdots & A_{2np} \\ 
\vdots & \vdots & \ddots & \vdots \\ 
A_{n1p} & A_{n2p} & \cdots & A_{nnp}%
\end{array}%
\right) ^{T} \\ 
\vdots \\ 
\left( 
\begin{array}{cccc}
A_{112} & A_{122} & \cdots & A_{1n2} \\ 
A_{212} & A_{222} & \cdots & A_{2n2} \\ 
\vdots & \vdots & \ddots & \vdots \\ 
A_{n12} & A_{n22} & \cdots & A_{nn2}%
\end{array}%
\right) ^{T} \\ 
\left( 
\begin{array}{cccc}
A_{111} & A_{121} & \cdots & A_{1n1} \\ 
A_{211} & A_{221} & \cdots & A_{2n1} \\ 
\vdots & \vdots & \ddots & \vdots \\ 
A_{n11} & A_{n21} & \cdots & A_{nn1}%
\end{array}%
\right) ^{T}%
\end{array}%
\right] .
\end{eqnarray*}
\end{proposition}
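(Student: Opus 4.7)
The plan is to reduce the statement to the classical 2D adjugate formula applied layer by layer, using the block structure that we have already established for multiplication and for the inverse. From the previous theorem, I can take as already known that the inverse of $\mathbf{A}_{n\times n\times p}$ exists and is of the form
\begin{equation*}
\mathbf{A}_{n\times n\times p}^{-1}=\left[
\begin{array}{c}
\mathbf{A}_{n\times n,p}^{-1} \\ \vdots \\ \mathbf{A}_{n\times n,2}^{-1} \\ \mathbf{A}_{n\times n,1}^{-1}
\end{array}
\right],
\end{equation*}
so the problem reduces to showing that each vertical layer on the right-hand side of the proposed formula coincides with $\mathbf{A}_{n\times n,i}^{-1}$.

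First I would expand the right-hand side $\widehat{\det(\mathbf{A}_{n\times n\times p})}\ast\overline{\overline{\mathbf{A}_{n\times n\times p}}}$ using the definition of multi-scalar multiplication from Section 2. By that definition, the $i$-th vertical layer of the product is obtained by multiplying the $n\times n$ matrix $\bigl(A_{jki}\bigr)^{T}$ (the transposed cofactor matrix of $\mathbf{A}_{n\times n,i}$) by the scalar $1/|\mathbf{A}_{n\times n,i}|$ coming from the $i$-th entry of $\widehat{\det(\mathbf{A}_{n\times n\times p})}$. That is, the $i$-th layer of the right-hand side is exactly
\begin{equation*}
\frac{1}{\det(\mathbf{A}_{n\times n,i})}\cdot\mathrm{adj}(\mathbf{A}_{n\times n,i}),
\end{equation*}
where $\mathrm{adj}$ denotes the classical $2$D adjugate.

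Next I would invoke the classical result from linear algebra (the same reference \cite{[GS4Ed]}, \cite{[ShALA3ED]} already used in the preceding proposition): for any nonsingular $n\times n$ matrix $M$ over a field, $M^{-1}=\frac{1}{\det M}\cdot\mathrm{adj}(M)$. Since each $\mathbf{A}_{n\times n,i}$ is nonsingular by hypothesis $\det(\mathbf{A}_{n\times n\times p})\neq\widetilde{{}^{a}0_{\mathbf{F}}}$, applying this yields that the $i$-th layer of the right-hand side equals $\mathbf{A}_{n\times n,i}^{-1}$ for every $i=\overline{1,p}$. Stacking these layers and comparing with the block description of $\mathbf{A}_{n\times n\times p}^{-1}$ recalled above, the two $3$D matrices agree.

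To finish I would verify the product $\mathbf{A}_{n\times n\times p}\odot\bigl[\widehat{\det(\mathbf{A}_{n\times n\times p})}\ast\overline{\overline{\mathbf{A}_{n\times n\times p}}}\bigr]$ directly from the layer formula for $\odot$, so that each layer collapses to $\mathbf{A}_{n\times n,i}\cdot\mathbf{A}_{n\times n,i}^{-1}=\mathbf{I}_{n\times n,i}$ and the whole product equals $\mathbf{I}_{n\times n\times p}$. The only mildly delicate point, and the one I would be most careful about, is the bookkeeping at the interface between the multi-scalar action $\ast$ on one side and the layerwise matrix product $\odot$ on the other: one has to check that the scalar $1/\det(\mathbf{A}_{n\times n,i})$ from layer $i$ of $\widehat{\det(\mathbf{A}_{n\times n\times p})}$ really multiplies the transposed cofactor block sitting in layer $i$ of $\overline{\overline{\mathbf{A}_{n\times n\times p}}}$ and not some other layer. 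Once this index alignment is confirmed (it follows immediately from Definition 6 since $\ast$ acts layerwise with the same index), the rest is just the classical $2$D adjugate identity applied $p$ times.
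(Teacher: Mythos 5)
Your proposal is correct and follows essentially the same route as the paper's own proof: expand the multi-scalar action $\ast$ layerwise so that the $i$-th vertical layer becomes $\frac{1}{\det(\mathbf{A}_{n\times n,i})}\cdot\mathrm{adj}(\mathbf{A}_{n\times n,i})$, identify this with $\mathbf{A}_{n\times n,i}^{-1}$ by the classical adjugate formula, and then check via the layerwise definition of $\odot$ that the product with $\mathbf{A}_{n\times n\times p}$ collapses to $\mathbf{I}_{n\times n\times p}$. If anything, your version is slightly more careful than the paper's, since you make explicit the index-alignment point and the appeal to the classical identity $M^{-1}=\frac{1}{\det M}\,\mathrm{adj}(M)$, which the paper leaves implicit in its ``$\overset{???}{=}$'' steps.
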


\begin{proof}
Verified with ease that:%
\begin{equation*}
\mathbf{A}_{n\times n\times p}\odot \mathbf{A}_{n\times n\times p}^{-1}=%
\mathbf{A}_{n\times n\times p}^{-1}\odot \mathbf{A}_{n\times n\times p}=%
\mathbf{I}_{n\times n\times p}.\bigskip
\end{equation*}

Where%
\begin{equation*}
\mathbf{I}_{n\times n\times p}=\left[ 
\begin{array}{c}
\mathbf{I}_{n\times n,p} \\ 
\vdots \\ 
\mathbf{I}_{n\times n,2} \\ 
\mathbf{I}_{n\times n,1}%
\end{array}%
\right] .
\end{equation*}

Let 
\begin{equation*}
\mathbf{A}_{m\times n\times p}\mathbf{=}\left[ 
\begin{array}{c}
\left( 
\begin{array}{cccc}
a_{11p} & a_{12p} & \cdots & a_{1np} \\ 
a_{21p} & a_{22p} & \cdots & a_{2np} \\ 
\vdots & \vdots & \ddots & \vdots \\ 
a_{m1p} & a_{m2p} & \cdots & a_{mnp}%
\end{array}%
\right) \\ 
\vdots \\ 
\left( 
\begin{array}{cccc}
a_{112} & a_{122} & \cdots & a_{1n2} \\ 
a_{212} & a_{222} & \cdots & a_{2n2} \\ 
\vdots & \vdots & \ddots & \vdots \\ 
a_{m12} & a_{m22} & \cdots & a_{mn2}%
\end{array}%
\right) \\ 
\left( 
\begin{array}{cccc}
a_{111} & a_{121} & \cdots & a_{1n1} \\ 
a_{211} & a_{221} & \cdots & a_{2n1} \\ 
\vdots & \vdots & \ddots & \vdots \\ 
a_{m11} & a_{m21} & \cdots & a_{mn1}%
\end{array}%
\right)%
\end{array}%
\right] \in \mathcal{M}_{n\times n\times p}^{\ast }(\mathbf{F}),
\end{equation*}

and 
\begin{eqnarray*}
\mathbf{A}_{n\times n\times p}^{-1} &=&\widehat{\det \left( \mathbf{A}%
_{n\times n\times p}\right) }\ast \overline{\overline{\mathbf{A}_{n\times
n\times p}}}= \\
&=&\left[ 
\begin{array}{c}
\left( \frac{1}{\left\vert \mathbf{A}_{n\times n,p}\right\vert }\right) \\ 
\vdots \\ 
\left( \frac{1}{\left\vert \mathbf{A}_{n\times n,2}\right\vert }\right) \\ 
\left( \frac{1}{\left\vert \mathbf{A}_{n\times n,1}\right\vert }\right)%
\end{array}%
\right] \ast \left[ 
\begin{array}{c}
\left( 
\begin{array}{cccc}
A_{11p} & A_{12p} & \cdots & A_{1np} \\ 
A_{21p} & A_{22p} & \cdots & A_{2np} \\ 
\vdots & \vdots & \ddots & \vdots \\ 
A_{n1p} & A_{n2p} & \cdots & A_{nnp}%
\end{array}%
\right) ^{T} \\ 
\vdots \\ 
\left( 
\begin{array}{cccc}
A_{112} & A_{122} & \cdots & A_{1n2} \\ 
A_{212} & A_{222} & \cdots & A_{2n2} \\ 
\vdots & \vdots & \ddots & \vdots \\ 
A_{n12} & A_{n22} & \cdots & A_{nn2}%
\end{array}%
\right) ^{T} \\ 
\left( 
\begin{array}{cccc}
A_{111} & A_{121} & \cdots & A_{1n1} \\ 
A_{211} & A_{221} & \cdots & A_{2n1} \\ 
\vdots & \vdots & \ddots & \vdots \\ 
A_{n11} & A_{n21} & \cdots & A_{nn1}%
\end{array}%
\right) ^{T}%
\end{array}%
\right] .
\end{eqnarray*}

to prove that it is true:%
\begin{equation*}
\mathbf{A}_{n\times n\times p}\odot \mathbf{A}_{n\times n\times p}^{-1}=%
\mathbf{A}_{n\times n\times p}^{-1}\odot \mathbf{A}_{n\times n\times p}=%
\mathbf{I}_{n\times n\times p}?
\end{equation*}

\bigskip really:%
\begin{equation*}
\mathbf{A}_{n\times n\times p}\odot \mathbf{A}_{n\times n\times p}^{-1}=%
\mathbf{A}_{n\times n\times p}\odot \left( \widehat{\det \left( \mathbf{A}%
_{n\times n\times p}\right) }\ast \overline{\overline{\mathbf{A}_{n\times
n\times p}}}\right) 
\end{equation*}%
\begin{equation*}
=\left[ 
\begin{array}{c}
\left( 
\begin{array}{cccc}
a_{11p} & a_{12p} & \cdots  & a_{1np} \\ 
a_{21p} & a_{22p} & \cdots  & a_{2np} \\ 
\vdots  & \vdots  & \ddots  & \vdots  \\ 
a_{m1p} & a_{m2p} & \cdots  & a_{mnp}%
\end{array}%
\right)  \\ 
\vdots  \\ 
\left( 
\begin{array}{cccc}
a_{112} & a_{122} & \cdots  & a_{1n2} \\ 
a_{212} & a_{222} & \cdots  & a_{2n2} \\ 
\vdots  & \vdots  & \ddots  & \vdots  \\ 
a_{m12} & a_{m22} & \cdots  & a_{mn2}%
\end{array}%
\right)  \\ 
\left( 
\begin{array}{cccc}
a_{111} & a_{121} & \cdots  & a_{1n1} \\ 
a_{211} & a_{221} & \cdots  & a_{2n1} \\ 
\vdots  & \vdots  & \ddots  & \vdots  \\ 
a_{m11} & a_{m21} & \cdots  & a_{mn1}%
\end{array}%
\right) 
\end{array}%
\right] \odot \left[ 
\begin{array}{c}
\left( \frac{1}{\left\vert \mathbf{A}_{n\times n,p}\right\vert }\right)  \\ 
\vdots  \\ 
\left( \frac{1}{\left\vert \mathbf{A}_{n\times n,2}\right\vert }\right)  \\ 
\left( \frac{1}{\left\vert \mathbf{A}_{n\times n,1}\right\vert }\right) 
\end{array}%
\right] \ast \left[ 
\begin{array}{c}
\left( 
\begin{array}{cccc}
A_{11p} & A_{12p} & \cdots  & A_{1np} \\ 
A_{21p} & A_{22p} & \cdots  & A_{2np} \\ 
\vdots  & \vdots  & \ddots  & \vdots  \\ 
A_{n1p} & A_{n2p} & \cdots  & A_{nnp}%
\end{array}%
\right) ^{T} \\ 
\vdots  \\ 
\left( 
\begin{array}{cccc}
A_{112} & A_{122} & \cdots  & A_{1n2} \\ 
A_{212} & A_{222} & \cdots  & A_{2n2} \\ 
\vdots  & \vdots  & \ddots  & \vdots  \\ 
A_{n12} & A_{n22} & \cdots  & A_{nn2}%
\end{array}%
\right) ^{T} \\ 
\left( 
\begin{array}{cccc}
A_{111} & A_{121} & \cdots  & A_{1n1} \\ 
A_{211} & A_{221} & \cdots  & A_{2n1} \\ 
\vdots  & \vdots  & \ddots  & \vdots  \\ 
A_{n11} & A_{n21} & \cdots  & A_{nn1}%
\end{array}%
\right) ^{T}%
\end{array}%
\right] 
\end{equation*}%
\begin{equation*}
\overset{???}{=}\left[ 
\begin{array}{c}
\left( 
\begin{array}{cccc}
a_{11p} & a_{12p} & \cdots  & a_{1np} \\ 
a_{21p} & a_{22p} & \cdots  & a_{2np} \\ 
\vdots  & \vdots  & \ddots  & \vdots  \\ 
a_{m1p} & a_{m2p} & \cdots  & a_{mnp}%
\end{array}%
\right)  \\ 
\vdots  \\ 
\left( 
\begin{array}{cccc}
a_{112} & a_{122} & \cdots  & a_{1n2} \\ 
a_{212} & a_{222} & \cdots  & a_{2n2} \\ 
\vdots  & \vdots  & \ddots  & \vdots  \\ 
a_{m12} & a_{m22} & \cdots  & a_{mn2}%
\end{array}%
\right)  \\ 
\left( 
\begin{array}{cccc}
a_{111} & a_{121} & \cdots  & a_{1n1} \\ 
a_{211} & a_{221} & \cdots  & a_{2n1} \\ 
\vdots  & \vdots  & \ddots  & \vdots  \\ 
a_{m11} & a_{m21} & \cdots  & a_{mn1}%
\end{array}%
\right) 
\end{array}%
\right] \odot \left[ 
\begin{array}{c}
\left( \frac{1}{\left\vert \mathbf{A}_{n\times n,p}\right\vert }\right)
\cdot \left( 
\begin{array}{cccc}
A_{11p} & A_{12p} & \cdots  & A_{1np} \\ 
A_{21p} & A_{22p} & \cdots  & A_{2np} \\ 
\vdots  & \vdots  & \ddots  & \vdots  \\ 
A_{n1p} & A_{n2p} & \cdots  & A_{nnp}%
\end{array}%
\right) ^{T} \\ 
\vdots  \\ 
\left( \frac{1}{\left\vert \mathbf{A}_{n\times n,2}\right\vert }\right)
\cdot \left( 
\begin{array}{cccc}
A_{112} & A_{122} & \cdots  & A_{1n2} \\ 
A_{212} & A_{222} & \cdots  & A_{2n2} \\ 
\vdots  & \vdots  & \ddots  & \vdots  \\ 
A_{n12} & A_{n22} & \cdots  & A_{nn2}%
\end{array}%
\right) ^{T} \\ 
\left( \frac{1}{\left\vert \mathbf{A}_{n\times n,1}\right\vert }\right)
\cdot \left( 
\begin{array}{cccc}
A_{111} & A_{121} & \cdots  & A_{1n1} \\ 
A_{211} & A_{221} & \cdots  & A_{2n1} \\ 
\vdots  & \vdots  & \ddots  & \vdots  \\ 
A_{n11} & A_{n21} & \cdots  & A_{nn1}%
\end{array}%
\right) ^{T}%
\end{array}%
\right] 
\end{equation*}%
\begin{equation*}
\overset{???}{=}\left[ 
\begin{array}{c}
\left( 
\begin{array}{cccc}
a_{11p} & a_{12p} & \cdots  & a_{1np} \\ 
a_{21p} & a_{22p} & \cdots  & a_{2np} \\ 
\vdots  & \vdots  & \ddots  & \vdots  \\ 
a_{m1p} & a_{m2p} & \cdots  & a_{mnp}%
\end{array}%
\right)  \\ 
\vdots  \\ 
\left( 
\begin{array}{cccc}
a_{112} & a_{122} & \cdots  & a_{1n2} \\ 
a_{212} & a_{222} & \cdots  & a_{2n2} \\ 
\vdots  & \vdots  & \ddots  & \vdots  \\ 
a_{m12} & a_{m22} & \cdots  & a_{mn2}%
\end{array}%
\right)  \\ 
\left( 
\begin{array}{cccc}
a_{111} & a_{121} & \cdots  & a_{1n1} \\ 
a_{211} & a_{221} & \cdots  & a_{2n1} \\ 
\vdots  & \vdots  & \ddots  & \vdots  \\ 
a_{m11} & a_{m21} & \cdots  & a_{mn1}%
\end{array}%
\right) 
\end{array}%
\right] \odot \left[ 
\begin{array}{c}
\left( 
\begin{array}{cccc}
a_{11p} & a_{12p} & \cdots  & a_{1np} \\ 
a_{21p} & a_{22p} & \cdots  & a_{2np} \\ 
\vdots  & \vdots  & \ddots  & \vdots  \\ 
a_{m1p} & a_{m2p} & \cdots  & a_{mnp}%
\end{array}%
\right) ^{-1} \\ 
\vdots  \\ 
\left( 
\begin{array}{cccc}
a_{112} & a_{122} & \cdots  & a_{1n2} \\ 
a_{212} & a_{222} & \cdots  & a_{2n2} \\ 
\vdots  & \vdots  & \ddots  & \vdots  \\ 
a_{m12} & a_{m22} & \cdots  & a_{mn2}%
\end{array}%
\right) ^{-1} \\ 
\left( 
\begin{array}{cccc}
a_{111} & a_{121} & \cdots  & a_{1n1} \\ 
a_{211} & a_{221} & \cdots  & a_{2n1} \\ 
\vdots  & \vdots  & \ddots  & \vdots  \\ 
a_{m11} & a_{m21} & \cdots  & a_{mn1}%
\end{array}%
\right) ^{-1}%
\end{array}%
\right] 
\end{equation*}%
\begin{equation*}
=\left[ 
\begin{array}{c}
\left( 
\begin{array}{cccc}
a_{11p} & a_{12p} & \cdots  & a_{1np} \\ 
a_{21p} & a_{22p} & \cdots  & a_{2np} \\ 
\vdots  & \vdots  & \ddots  & \vdots  \\ 
a_{m1p} & a_{m2p} & \cdots  & a_{mnp}%
\end{array}%
\right) \cdot \left( 
\begin{array}{cccc}
a_{11p} & a_{12p} & \cdots  & a_{1np} \\ 
a_{21p} & a_{22p} & \cdots  & a_{2np} \\ 
\vdots  & \vdots  & \ddots  & \vdots  \\ 
a_{m1p} & a_{m2p} & \cdots  & a_{mnp}%
\end{array}%
\right) ^{-1} \\ 
\vdots  \\ 
\left( 
\begin{array}{cccc}
a_{112} & a_{122} & \cdots  & a_{1n2} \\ 
a_{212} & a_{222} & \cdots  & a_{2n2} \\ 
\vdots  & \vdots  & \ddots  & \vdots  \\ 
a_{m12} & a_{m22} & \cdots  & a_{mn2}%
\end{array}%
\right) \cdot \left( 
\begin{array}{cccc}
a_{112} & a_{122} & \cdots  & a_{1n2} \\ 
a_{212} & a_{222} & \cdots  & a_{2n2} \\ 
\vdots  & \vdots  & \ddots  & \vdots  \\ 
a_{m12} & a_{m22} & \cdots  & a_{mn2}%
\end{array}%
\right) ^{-1} \\ 
\left( 
\begin{array}{cccc}
a_{111} & a_{121} & \cdots  & a_{1n1} \\ 
a_{211} & a_{221} & \cdots  & a_{2n1} \\ 
\vdots  & \vdots  & \ddots  & \vdots  \\ 
a_{m11} & a_{m21} & \cdots  & a_{mn1}%
\end{array}%
\right) \cdot \left( 
\begin{array}{cccc}
a_{111} & a_{121} & \cdots  & a_{1n1} \\ 
a_{211} & a_{221} & \cdots  & a_{2n1} \\ 
\vdots  & \vdots  & \ddots  & \vdots  \\ 
a_{m11} & a_{m21} & \cdots  & a_{mn1}%
\end{array}%
\right) ^{-1}%
\end{array}%
\right] 
\end{equation*}%
\begin{equation*}
=\left[ 
\begin{array}{c}
\left( 
\begin{array}{cccc}
1_{F} & 0_{F} & \cdots  & 0_{F} \\ 
0_{F} & 1_{F} & \cdots  & 0_{F} \\ 
\vdots  & \vdots  & \ddots  & \vdots  \\ 
0_{F} & 0_{F} & \cdots  & 1_{F}%
\end{array}%
\right)  \\ 
\vdots  \\ 
\left( 
\begin{array}{cccc}
1_{F} & 0_{F} & \cdots  & 0_{F} \\ 
0_{F} & 1_{F} & \cdots  & 0_{F} \\ 
\vdots  & \vdots  & \ddots  & \vdots  \\ 
0_{F} & 0_{F} & \cdots  & 1_{F}%
\end{array}%
\right)  \\ 
\left( 
\begin{array}{cccc}
1_{F} & 0_{F} & \cdots  & 0_{F} \\ 
0_{F} & 1_{F} & \cdots  & 0_{F} \\ 
\vdots  & \vdots  & \ddots  & \vdots  \\ 
0_{F} & 0_{F} & \cdots  & 1_{F}%
\end{array}%
\right) 
\end{array}%
\right] =\left[ 
\begin{array}{c}
\mathbf{I}_{n\times n,p} \\ 
\vdots  \\ 
\mathbf{I}_{n\times n,2} \\ 
\mathbf{I}_{n\times n,1}%
\end{array}%
\right] =\mathbf{I}_{n\times n\times p}.
\end{equation*}
\end{proof}

\begin{example}
Let's have \ $A_{3\times 3\times 2}=\left[ 
\begin{array}{c}
\left( 
\begin{array}{ccc}
3 & 1 & 5 \\ 
0 & 2 & 1 \\ 
1 & 7 & 4%
\end{array}%
\right)  \\ 
\left( 
\begin{array}{ccc}
1 & 2 & 4 \\ 
8 & 1 & 1 \\ 
3 & 1 & 0%
\end{array}%
\right) 
\end{array}%
\right] \in \mathcal{M}_{3\times 3\times 2}^{\ast }(\mathbb{R}).$ Find its
inverse matrix?

\begin{solution}
The inverse matrix has form:%
\begin{equation*}
\mathbf{A}_{3\times 3\times 2}^{-1}=\widehat{\det \left( \mathbf{A}_{3\times
3\times 2}\right) }\ast \overline{\overline{\mathbf{A}_{3\times 3\times 2}}}
\end{equation*}

we see that%
\begin{equation*}
\det \left( A_{3\times 3\times 2}\right) =\left[ 
\begin{array}{c}
\det \left( 
\begin{array}{ccc}
3 & 1 & 5 \\ 
0 & 2 & 1 \\ 
1 & 7 & 4%
\end{array}%
\right)  \\ 
\det \left( 
\begin{array}{ccc}
1 & 2 & 4 \\ 
8 & 1 & 1 \\ 
3 & 1 & 0%
\end{array}%
\right) 
\end{array}%
\right] =\left[ 
\begin{array}{c}
\left( -6\right)  \\ 
\left( 25\right) 
\end{array}%
\right] \Rightarrow \ \widehat{\det \left( \mathbf{A}_{3\times 3\times
2}\right) }=\left[ 
\begin{array}{c}
\left( -\frac{1}{6}\right)  \\ 
\left( \frac{1}{25}\right) 
\end{array}%
\right] .
\end{equation*}

And 
\begin{equation*}
\overline{\overline{\mathbf{A}_{3\times 3\times 2}}}=\left[ 
\begin{array}{c}
\left( 
\begin{array}{ccc}
1 & 1 & -2 \\ 
31 & 7 & -20 \\ 
-9 & -3 & 6%
\end{array}%
\right) ^{T} \\ 
\left( 
\begin{array}{ccc}
-1 & 3 & 5 \\ 
4 & -12 & 5 \\ 
-2 & 31 & -15%
\end{array}%
\right) ^{T}%
\end{array}%
\right] =\left[ 
\begin{array}{c}
\left( 
\begin{array}{ccc}
1 & 31 & -9 \\ 
1 & 7 & -3 \\ 
-2 & -20 & 6%
\end{array}%
\right)  \\ 
\left( 
\begin{array}{ccc}
-1 & 4 & -2 \\ 
3 & -12 & 31 \\ 
5 & 5 & -15%
\end{array}%
\right) 
\end{array}%
\right] ,
\end{equation*}

Then,%
\begin{eqnarray*}
\mathbf{A}_{3\times 3\times 2}^{-1} &=&\widehat{\det \left( \mathbf{A}%
_{3\times 3\times 2}\right) }\ast \overline{\overline{\mathbf{A}_{3\times
3\times 2}}} \\
&=&\left[ 
\begin{array}{c}
\left( -\frac{1}{6}\right)  \\ 
\left( \frac{1}{25}\right) 
\end{array}%
\right] \ast \left[ 
\begin{array}{c}
\left( 
\begin{array}{ccc}
1 & 31 & -9 \\ 
1 & 7 & -3 \\ 
-2 & -20 & 6%
\end{array}%
\right)  \\ 
\left( 
\begin{array}{ccc}
-1 & 4 & -2 \\ 
3 & -12 & 31 \\ 
5 & 5 & -15%
\end{array}%
\right) 
\end{array}%
\right] =\left[ 
\begin{array}{c}
\left( -\frac{1}{6}\right) \cdot \left( 
\begin{array}{ccc}
1 & 31 & -9 \\ 
1 & 7 & -3 \\ 
-2 & -20 & 6%
\end{array}%
\right)  \\ 
\left( \frac{1}{25}\right) \cdot \left( 
\begin{array}{ccc}
-1 & 4 & -2 \\ 
3 & -12 & 31 \\ 
5 & 5 & -15%
\end{array}%
\right) 
\end{array}%
\right] 
\end{eqnarray*}%
\begin{equation*}
\Rightarrow \mathbf{A}_{3\times 3\times 2}^{-1}=\left[ 
\begin{array}{c}
\left( 
\begin{array}{ccc}
-\frac{1}{6} & -\frac{31}{6} & \frac{3}{2} \\ 
-\frac{1}{6} & -\frac{7}{6} & \frac{1}{2} \\ 
\frac{1}{3} & \frac{10}{3} & -1%
\end{array}%
\right)  \\ 
\left( 
\begin{array}{ccc}
-\frac{1}{25} & \frac{4}{25} & -\frac{2}{25} \\ 
\frac{3}{25} & -\frac{12}{25} & \frac{31}{25} \\ 
\frac{1}{5} & \frac{1}{5} & -\frac{3}{5}%
\end{array}%
\right) 
\end{array}%
\right] 
\end{equation*}

\bigskip 
\end{solution}
\end{example}

\end{document}